\numberwithin{equation}{section}
\newcommand{\GCH}{{\rm GCH}}
\newcommand{\ZFC}{{\rm ZFC}}
\newcommand{\ZF}{{\rm ZF}}
\newcommand{\AC}{{\rm AC}}
\newcommand{\ORD}{\mathop{{\rm ORD}}}
\newcommand{\AD}{{\rm AD}}
\renewcommand{\P}{{\mathbb P}}
\newcommand{\Q}{{\mathbb Q}}
\newcommand{\Add}{\mathop{\rm Add}}
\newcommand{\Coll}{\mathop{\rm Coll}}
\newcommand{\forces}{\Vdash}
\newcommand{\decides}{\parallel}
\newcommand{\strongsubset}{\hspace{-0.15em}\begin{tabular}{c}$\subset$\\[-0.8 em]$\sim$\\\end{tabular}\hspace{-0.13em}}
\newcommand{\restrict}{\upharpoonright}
\newcommand{\concat}{\mathbin{{}^\smallfrown}}
\newcommand{\la}{\langle}
\newcommand{\ra}{\rangle}
\newcommand{\supp}{\mathop{\rm supp}}
\newcommand{\dom}{\mathop{\rm dom}}
\newcommand{\ot}{\mathop{\rm ot}\nolimits}
\newcommand{\cf}{\mathop{\rm cf}}
\newtheorem{theorem}{Theorem}
\newtheorem{lemma}[theorem]{Lemma}
\newtheorem{question}{Question}
\thanks{The first author's research was
partially supported by PSC-CUNY grants.}
\subjclass[2000]{03E25, 03E35, 03E45, 03E55}
\keywords{Supercompact cardinal, supercompact
Prikry forcing, GCH, symmetric inner model.}
\date{December 6, 2011
      (revised April 5, 2012)}
\begin{document}

\title[Consec. singular cardinals and the continuum function]{Consecutive singular cardinals and the continuum function}

\author[Arthur W. Apter]{Arthur W. Apter}
\address[Arthur W. Apter]{ 
Department of Mathematics, 
Baruch College of CUNY, 
One Bernard Baruch Way, 
New York, New York 10010 USA, and
Department of Mathematics, 
The Graduate Center of CUNY, 
365 Fifth Avenue,
New York, NY 10016 USA}
\email[A. W. ~Apter]{awapter@alum.mit.edu}
\urladdr{http://faculty.baruch.cuny.edu/aapter/}

\author[Brent Cody]{Brent Cody}
\address[Brent Cody]{ 
Department of Mathematics, 
The Graduate Center of CUNY, 
365 Fifth Avenue,
New York, NY 10016 USA} 
\email[B. ~Cody]{bcody@gc.cuny.edu} 
\urladdr{https://wfs.gc.cuny.edu/BCody/www/}

\maketitle

\begin{abstract}
We show that from a supercompact cardinal $\kappa$, there is a forcing extension $V[G]$ that has a symmetric inner model $N$ in which $\ZF+\lnot\AC$ holds, $\kappa$ and $\kappa^+$ are both singular, and the continuum function at $\kappa$ can be precisely controlled, in the sense that the final model contains a sequence of distinct subsets of $\kappa$ of length equal to any predetermined ordinal. We also show that the above situation can be collapsed to obtain a model of $\ZF+\lnot\AC_\omega$ in which either (1) $\aleph_1$ and $\aleph_2$ are both singular and the continuum function at $\aleph_1$ can be precisely controlled, or (2) $\aleph_\omega$ and $\aleph_{\omega+1}$ are both singular and the continuum function at $\aleph_\omega$ can be precisely controlled. Additionally, we discuss a result in which we separate the lengths of sequences of distinct subsets of consecutive singular cardinals $\kappa$ and $\kappa^+$ in a model of $\ZF$. Some open questions concerning the continuum function in models of $\ZF$ with consecutive singular cardinals are posed.

\end{abstract}

\section{Introduction}

In this paper we will be motivated by the question: Are there models of $\ZF$ with consecutive singular cardinals $\kappa$ and $\kappa^+$ such that ``$\GCH$ fails at $\kappa$'' in the sense that there is a sequence of distinct subsets of $\kappa$ of length greater than $\kappa^+$? Let us start by considering some known models of $\ZF$ that have consecutive singular cardinals.

Gitik showed in \cite{Gitik:AllUncountableCardinalsCanBeSingular} that from a proper class of strongly compact cardinals, $\langle\kappa_\alpha\mid\alpha\in\ORD\rangle$, there is a model of $\ZF+\lnot\AC_\omega$ in which all uncountable cardinals are singular. Essentially he uses a certain type of generalized Prikry forcing that simultaneously singularizes and collapses each $\kappa_\alpha$, thereby resulting in a model in which the class of uncountable well-ordered cardinals consists of the previously strongly compact $\kappa_\alpha$'s and their limits. In this model, every uncountable cardinal is singular, and for each $\alpha\in\ORD$ and for each limit ordinal $\lambda$, all cardinals in the open intervals $(\kappa_\alpha,\kappa_{\alpha+1})$ and $(\sup_{\beta<\lambda}\kappa_\beta,\kappa_\lambda)$ have been collapsed to have size $\kappa_\alpha$ and $\sup_{\beta<\lambda}\kappa_\beta$ respectively. Since each $\kappa_\alpha$ is a strong limit cardinal in the ground model, it follows that in Gitik's final model there is no cardinal $\kappa$ that has a sequence of distinct subsets of length greater than---or even equal to---$\kappa^+$.
(Of course, trivially, in any model of $\ZF$, for any cardinal $\kappa$,
there is always a $\kappa$-sequence of distinct subsets of $\kappa$ given
by the sequence of intervals $\la [\alpha,\kappa)\mid\alpha<\kappa\ra$.
This trivially also implies that,
for any $\beta \in (\kappa, \kappa^+)$,
there is a $\beta$-sequence of distinct subsets
of $\kappa$ as well.)
For similar reasons, the models constructed in \cite{Gitik:RegularCardinalsInModelsOfZF} and \cite{ApterDimitiouKoepke:TheFirstMeasurableCardinalCanBe} will also not have consecutive singular cardinals $\kappa$ and $\kappa^+$ with a sequence of distinct subsets of $\kappa$ of length even $\kappa^+$.

There has been a great deal of work, involving forcing over models of $\AD$, in which models are constructed having consecutive singular cardinals, as exemplified by \cite{Apter:ADAndPatterns}. However, in any model of $\AD$, no cardinal $\kappa<\Theta$ has a sequence of distinct subsets of length $\kappa^+$ let alone of longer length (see \cite{Steel:OutlineOfInnerModelTheory}). Thus, forcing over a model of $\AD$ does not seem to yield, in any obvious way, a model containing consecutive singular cardinals, $\kappa$ and $\kappa^+$, in which there is a sequence of distinct subsets of $\kappa$ of length $\kappa^+$.

In this article, we will show that from a supercompact cardinal, there are models of $\ZF+\lnot\AC$ that have consecutive singular cardinals, say $\kappa$ and $\kappa^+$, such that there is a sequence of distinct subsets of $\kappa$ of length equal to any predetermined ordinal. Indeed, we will prove the following.

\begin{theorem}\label{maintheorem}
Suppose $\kappa$ is supercompact, $\GCH$ holds, and $\theta$ is an ordinal. Then there is a forcing extension $V[G]$ that has a symmetric inner model $N\subseteq V[G]$ of $\ZF+\lnot\AC$ in which the following hold.
\begin{enumerate}
\item $\kappa$ and $\kappa^+$ are both singular with $\cf(\kappa)^N=\omega$ and $\cf(\kappa^+)^N<\kappa$.
\item $\kappa$ is a strong limit cardinal that is a limit of inaccessible cardinals.
\item There is a sequence of distinct subsets of $\kappa$ of length $\theta$.
\end{enumerate}
\end{theorem}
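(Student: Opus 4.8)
The plan is to obtain $N$ as a symmetric inner model of an extension built in three stages: a preparatory stage that adjoins the required $\theta$ distinct subsets of $\kappa$ while keeping $\kappa$ supercompact; an ordinary Prikry forcing on $\kappa$ that singularizes $\kappa$ to cofinality $\omega$; and a Feferman--L\'evy-style product of L\'evy collapses just above $\kappa$, whose symmetric submodel leaves the ordinal $\kappa^{+\omega}$ of the intermediate model uncollapsed but of cofinality $\omega$, so that it becomes the new $\kappa^+$. The key point is that the full extension \emph{does} collapse this ordinal, but the symmetric submodel retains only the individual collapses, not their amalgamation.

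\textbf{The construction.} By Laver's theorem (or a direct master-condition lifting, available since all forcing at $\kappa$ below is $<\kappa$-directed closed) we may assume the supercompactness of $\kappa$ is indestructible under $<\kappa$-directed closed forcing and that $\GCH$ still holds at and below $\kappa$. Force with $\Add(\kappa,\theta)$ (conditions of size $<\kappa$) to get $V_1=V[g]$: in $V_1$ the $\theta$ Cohen subsets of $\kappa$ give a length-$\theta$ sequence of distinct subsets of $\kappa$, $\kappa$ is still supercompact (so carries a normal measure $W$), and since no bounded subsets of $\kappa$ were added, $(V_\kappa)^{V_1}=(V_\kappa)^V$, so $\kappa$ is still a strong limit cardinal and a limit of inaccessibles, and all cardinals are preserved. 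Now, in $V_1$, let $\mathbb{M}$ be the Prikry forcing on $\kappa$ relative to $W$, let $\mathbb{C}=\prod^{\mathrm{fin}}_{1\le n<\omega}\Coll(\kappa,(\kappa^{+n})^{V_1})$ be the finite-support product, put $\mathbb{P}=\mathbb{M}\times\mathbb{C}$, and force to get $V[G]$. Let $\mathcal G=\prod_{1\le n<\omega}\mathrm{Sym}((\kappa^{+n})^{V_1})$ act on $\mathbb{P}$ coordinatewise on the $\mathbb{C}$-factor and trivially on $\mathbb{M}$; since the subgroups $\mathrm{fix}(e)=\{\pi\in\mathcal G:\pi$ is the identity on the coordinates in $e\}$ (for $e\in[\omega\setminus\{0\}]^{<\omega}$) are normal in $\mathcal G$, the filter $\mathcal F$ they generate is automatically a normal filter of subgroups. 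Let $N$ be the resulting symmetric extension.

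\textbf{Verifying (1)--(3).} Since $V_1\subseteq N$, the Cohen sequence witnesses (3). The Prikry sequence $\la\kappa_n\mid n<\omega\ra$ is fixed by all of $\mathcal G$, hence is in $N$, so $\cf(\kappa)^N=\omega$. By the approximation lemma for symmetric extensions, every set of ordinals of $N$ lies in $V_1[G\restrict(\mathbb{M}\times(\mathbb{C}\restrict e))]$ for some finite $e$; since $\mathbb{M}$ is Prikry forcing and $\mathbb{C}\restrict e$ is $<\kappa$-closed, no bounded subset of $\kappa$ is added to $N$ and $\kappa$ stays a cardinal, so $(V_\kappa)^N=(V_\kappa)^V$ and (2) holds. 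For each $n\ge 1$ the generic surjection of $\Coll(\kappa,(\kappa^{+n})^{V_1})$ has a symmetric name with support $\{n\}$, hence lies in $N$, so each $(\kappa^{+n})^{V_1}$ is collapsed to $\kappa$ in $N$ and therefore $(\kappa^+)^N=(\kappa^{+\omega})^{V_1}$; since the cofinal sequence $\la(\kappa^{+n})^{V_1}\mid n<\omega\ra$ lies in $V_1\subseteq N$, we get $\cf(\kappa^+)^N=\omega<\kappa$, proving (1)---\emph{provided} $(\kappa^{+\omega})^{V_1}$ is not collapsed in $N$. Lastly, the $\mathcal G$-fixed sequence $\la B_n\mid 1\le n<\omega\ra$, where $B_n$ is the set of surjections $\kappa\to(\kappa^{+n})^{V_1}$ belonging to $N$, lies in $N$ and consists of nonempty sets with no choice function (a choice function would amalgamate to a surjection $\kappa\to(\kappa^{+\omega})^{V_1}$), so $N\models\lnot\AC_\omega$, and in particular $N\models\lnot\AC$.

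\textbf{The main obstacle.} The genuine work is the italicized proviso: $N$ contains no surjection of $\kappa$ onto $(\kappa^{+\omega})^{V_1}$. Granting the approximation lemma, such a surjection would lie in $V_1[G\restrict(\mathbb{M}\times(\mathbb{C}\restrict e))]$ for some finite $e$; but $\mathbb{C}\restrict e=\prod_{n\in e}\Coll(\kappa,(\kappa^{+n})^{V_1})$ collapses only $(\kappa^{+\max e})^{V_1}$ to $\kappa$ and has size $(\kappa^{+\max e})^{V_1}$, so $\mathbb{M}\times(\mathbb{C}\restrict e)$ is $(\kappa^{+(\max e+1)})^{V_1}$-c.c.\ and preserves $(\kappa^{+\omega})^{V_1}$---a contradiction. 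Thus the real content is the approximation lemma itself: one must show, by moving a given condition with automorphisms in $\mathrm{fix}(e)$ (which act only on the coordinates outside $e$) and exploiting the homogeneity of the tail product $\prod_{n\notin e}\Coll(\kappa,(\kappa^{+n})^{V_1})$, that the truth value of ``$\gamma\in A$'' for a set of ordinals $A$ with support $e$ is already decided by the $\mathbb{M}\times(\mathbb{C}\restrict e)$-part of the generic. Everything else---indestructibility and preservation of $(V_\kappa)^V$, persistence of the Cohen sequence, symmetry of the Prikry sequence, and the failure of choice---is routine once this is in hand.
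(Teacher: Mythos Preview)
Your approach differs substantially from the paper's. The paper uses a single supercompact Prikry forcing on $P_\kappa\lambda$ (for a suitable $\lambda>\kappa$ of small cofinality), which simultaneously singularizes $\kappa$ and collapses the interval $(\kappa,\lambda]$; the symmetric model $N$ is obtained by retaining only the restricted Prikry sequences $r_\delta$ for regular $\delta\in[\kappa,\lambda)$. You instead separate these two tasks: ordinary Prikry forcing $\mathbb{M}$ on $\kappa$ to get $\cf(\kappa)=\omega$, and a Feferman--L\'evy product $\mathbb{C}=\prod^{\mathrm{fin}}_{n\ge1}\Coll(\kappa,(\kappa^{+n})^{V_1})$ whose symmetric submodel is meant to turn $(\kappa^{+\omega})^{V_1}$ into the new $\kappa^+$.

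Your argument for (2), however, contains a genuine error. You assert that ``since $\mathbb{M}$ is Prikry forcing and $\mathbb{C}\restrict e$ is $<\kappa$-closed, no bounded subset of $\kappa$ is added.'' This is false: the product $\mathbb{M}\times\Coll(\kappa,(\kappa^+)^{V_1})$ adds new reals. Concretely, if $f:\kappa\to(\kappa^+)^{V_1}$ is the generic surjection and $\langle\kappa_n\mid n<\omega\rangle$ the Prikry sequence, then $r=\{n:f(\kappa_n)=0\}$ is a real not in $V_1$: given any condition $(p,q)$ with stem of length $m$, pick $\alpha$ in the measure-one set of $p$ with $\alpha\notin\dom(q)$; extending the stem by $\alpha$ and then setting $q'(\alpha)$ to $0$ or to $1$ shows that $(p,q)$ cannot decide ``$m\in\dot r$,'' so no condition forces $\dot r$ to equal a ground-model real. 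Since both $f$ and the Prikry sequence have hereditarily symmetric names (support $\{1\}$ and $\emptyset$ respectively), $r\in N$. Thus $(V_\kappa)^N\neq(V_\kappa)^V$ and your justification of (2) collapses. The underlying reason is that after $\Coll(\kappa,\kappa^{+n})$ has added new subsets of $\kappa$, the filter $W$ is no longer an ultrafilter, so $\mathbb{M}$ loses the Prikry property; the Prikry sequence then interacts with those new subsets of $\kappa$ to manufacture new reals. This interaction seems hard to avoid while keeping collapses of the form $\Coll(\kappa,\cdot)$, and replacing them by $\Coll(\kappa^+,\cdot)$ would leave $\kappa^+$ regular in $N$. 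The paper's construction sidesteps the problem entirely: each restricted forcing $\P_{U\restrict\delta}$ is an honest supercompact Prikry forcing over $V$ (because $U\restrict\delta$ is a genuine normal fine measure there), and hence adds no bounded subsets of $\kappa$.
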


Let us remark here that property (3) in Theorem \ref{maintheorem} makes this result interesting, since none of the previously known models with consecutive singular cardinals discussed above satisfies it when $\theta\geq\kappa^+$. Since the definitions of ``strong limit cardinal'' and ``inaccessible cardinal'' generally do not make sense in models of $\lnot\AC$, let us explain why the assertion in Theorem \ref{maintheorem}
that (2) holds in $N$ makes sense.
It will be the case that $N$ and $V$ will have the same bounded subsets
of $\kappa$, and from this it follows that the usual definitions of
``$\kappa$ is a strong limit cardinal'' and ``$\delta<\kappa$ is an inaccessible cardinal'' make sense in $N$.





Using the methods of \cite{Bull:SuccessiveLargeCardinals}, \cite{Apter:SomeResultsOnConsecutiveLargeCardinals}, and \cite{ApterHenle:RelativeConsistencyResultsViaStrongCompactness} we also obtain the following two results.

\begin{theorem}\label{theoremcollapse1}
Suppose $\kappa$ is supercompact, $\GCH$ holds, and $\theta$ is an ordinal. Then there is a model of $\ZF+\lnot \AC_\omega$ in which $\cf(\aleph_1)=\cf(\aleph_2)=\omega$, and there is a sequence of distinct subsets of $\aleph_1$ of length $\theta$.
\end{theorem}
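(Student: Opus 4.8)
The plan is to adjoin a symmetric Levy collapse to the forcing construction underlying Theorem \ref{maintheorem}, following the pattern of \cite{Bull:SuccessiveLargeCardinals}, \cite{Apter:SomeResultsOnConsecutiveLargeCardinals}, and \cite{ApterHenle:RelativeConsistencyResultsViaStrongCompactness}. Concretely, I would work with a single forcing $\mathbb{S}\in V$ that incorporates both the iteration $\P$ used to prove Theorem \ref{maintheorem} (the preparation adding the $\theta$ subsets of $\kappa$, the supercompact Prikry forcing singularizing $\kappa$, and the further forcing rendering $\kappa^+$ singular) and a Levy collapse of the interval $(\omega,\kappa)$ --- either a copy of $\Coll(\omega,<\kappa)$ performed while $\kappa$ is still inaccessible, or a collapse interleaved with the Prikry sequence singularizing $\kappa$ in the manner of \cite{Bull:SuccessiveLargeCardinals} and \cite{Gitik:AllUncountableCardinalsCanBeSingular} --- arranged so that every cardinal of $(\omega,\kappa)$ is collapsed to $\omega$ while $\kappa$ and $\kappa^+$ are preserved. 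Over a $V$-generic $G$, I would then take the symmetric inner model $N'\subseteq V[G]$ of $\ZF$ with respect to the automorphism group generated by the symmetries used for Theorem \ref{maintheorem} together with all finitely supported permutations of the coordinates of the collapse, and the filter of subgroups generated by ``support bounded below $\beta$'' for $\beta<\kappa$, combined with the filter from Theorem \ref{maintheorem}.

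Granting that this combined symmetric system can be set up, the verification runs as follows. Since every cardinal of $(\omega,\kappa)$ is collapsed to $\omega$ while $\kappa$ and $\kappa^+$ survive, $\aleph_1^{N'}=\kappa$ and $\aleph_2^{N'}=\kappa^+$. The cofinal $\omega$-sequence in $\kappa$ added by the Prikry forcing has a symmetric name for the combined system, so it lies in $N'$; hence $\cf^{N'}(\aleph_1)=\omega$. By Theorem \ref{maintheorem}(1) there is a cofinal map into $\kappa^+$ of some length $<\kappa$, and, since that length is collapsed to $\omega$, this yields $\cf^{N'}(\aleph_2)=\omega$ as well (with a little care to check that the relevant maps lie in $N'$). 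In particular $\aleph_1$ is singular in $N'$, and every model of $\AC_\omega$ has $\aleph_1$ regular, so $N'\models\lnot\AC_\omega$. Finally, the $\theta$-sequence $\langle A_\alpha\mid\alpha<\theta\rangle$ of distinct subsets of $\kappa$ furnished by Theorem \ref{maintheorem}(3) is produced by the $\P$-part of $\mathbb{S}$, which commutes with the collapse, and is hereditarily symmetric for the combined system; so it belongs to $N'$ and is still a sequence of distinct subsets of $\kappa=\aleph_1$ of length $\theta$.

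The step I expect to be the main obstacle is arranging $\mathbb{S}$ and the combined symmetric structure so that these demands do not collide. On the one hand the collapse must not damage the large-cardinal apparatus that the proof of Theorem \ref{maintheorem} uses to singularize $\kappa$ and, more delicately, its successor $\kappa^+$; this forces the collapse to be organized either as an essentially independent factor --- analysed at a stage where $\kappa$ still carries the requisite supercompactness --- or as a collapse interleaved with the Prikry points, and in either case one must verify that $\kappa^+$ is not collapsed. On the other hand the enlarged automorphism group must be large enough that no well-ordering of $\mathcal{P}(\omega)$, and no choice function for a countable family of nonempty sets, re-enters $N'$, yet small enough that $\langle A_\alpha\mid\alpha<\theta\rangle$ stays hereditarily symmetric. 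Tracking the supports of the relevant names through the combined iteration is precisely the kind of computation carried out in \cite{Bull:SuccessiveLargeCardinals}, \cite{Apter:SomeResultsOnConsecutiveLargeCardinals}, and \cite{ApterHenle:RelativeConsistencyResultsViaStrongCompactness}, and I would model the argument closely on those.
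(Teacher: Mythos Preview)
Your approach---combining the Prikry-type forcing with the L\'evy collapse into a single forcing over $V$ and taking one symmetric submodel---can be made to work, and indeed this is close to what the references you cite do. The paper, however, proceeds more modularly: it first builds the model $N$ of Theorem~\ref{maintheorem} (so $\cf(\kappa)^N=\omega$, $\cf(\kappa^+)^N<\kappa$, and $N$ contains the $\theta$-sequence of subsets of $\kappa$), and then forces \emph{over $N$} with $\Coll(\omega,{<}\kappa)$, taking a second symmetric inner model $M\subseteq N[G]$ generated by the restricted generics $G_n=G\cap\Coll(\omega,{<}\kappa_n)$ along an increasing sequence of inaccessibles $\kappa_n\nearrow\kappa$.

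The payoff of the paper's two-step route is that the obstacle you single out---preventing the collapse from disturbing the supercompactness machinery used to singularize $\kappa$ and $\kappa^+$---simply vanishes: by the time the collapse is performed, the singularization is already complete and lives in $N$. The only new issue is forcing over a $\ZF$ (non-$\AC$) model, and here the paper uses that $N$ and $V$ share the same bounded subsets of $\kappa$, so each $\Coll(\omega,{<}\kappa_n)$ is well-orderable in $N$ with order type $\kappa_n$; hence the usual chain-condition arguments preserve cardinals and cofinalities $\geq\kappa$ from $N$ to each $N[G_n]$, and a standard lemma (every set of ordinals in $M$ lies in some $N[G_n]$) finishes the cardinal analysis. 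Your combined-forcing approach buys a single symmetric model in one pass, at the cost of the support bookkeeping you anticipate; the paper's approach trades that for two easy symmetric constructions done sequentially.
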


\begin{theorem}\label{theoremcollapseomega}
Suppose $\kappa$ is supercompact, $\GCH$ holds, and $\theta$ is an ordinal. Then there is a model of $\ZF+\lnot\AC_\omega$ in which $\aleph_\omega$ and $\aleph_{\omega+1}$ are both singular with $\omega\leq\cf(\aleph_{\omega+1})<\aleph_\omega$, and there is a sequence of distinct subsets of $\aleph_\omega$ of length $\theta$.
\end{theorem}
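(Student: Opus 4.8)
The plan is to re-run the proof of Theorem~\ref{maintheorem} with L\'evy collapses interleaved between the Prikry points, following the methods of \cite{Bull:SuccessiveLargeCardinals}, \cite{Apter:SomeResultsOnConsecutiveLargeCardinals}, and \cite{ApterHenle:RelativeConsistencyResultsViaStrongCompactness}. Write $\la\kappa_n\mid n<\omega\ra$ for the increasing sequence, cofinal in $\kappa$, added by the supercompact Prikry component of the forcing from Theorem~\ref{maintheorem}; since $\kappa$ is supercompact one may take each $\kappa_n$ to be inaccessible in $V$ (indeed far more). Replace that component by the corresponding ``supercompact Prikry forcing with interleaved collapses'': below the stem a condition now also carries genuine conditions in $\Coll(\omega,{<}\kappa_0)$ and in $\Coll(\kappa_{m-1},{<}\kappa_m)$ for each $m$ up to the stem length, while above the stem it carries names for such conditions, chosen coherently from the relevant measure-one sets, exactly as in the cited references; the component that adds the $\theta$ subsets of $\kappa$ is left untouched. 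Let $\P^{*}$ be the resulting forcing and $G^{*}$ a $V$-generic filter for it.

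Next I would build a symmetric inner model $N^{*}\subseteq V[G^{*}]$ whose automorphism group is generated by the automorphisms used for $N$ in Theorem~\ref{maintheorem} together with the standard ``tail'' automorphisms of the interleaved collapses (an automorphism fixing the first $n$ collapse blocks and permuting the rest), and whose filter is generated by the Theorem~\ref{maintheorem} filter together with the subgroups fixing all collapse blocks below some $\kappa_n$. Each collapse block $\Coll(\kappa_{m-1},{<}\kappa_m)$ has size $<\kappa$ and is suitably closed, the interleaved collapse as a whole preserves $\kappa^{+}$, and interleaving it does not interfere with the singularization of $\kappa^{+}$; so standard arguments give, in $N^{*}$: $\kappa_0=\aleph_1$ and $\kappa_m=\aleph_{m+1}$ for every $m$, whence $\kappa=\aleph_\omega$ with $\cf(\aleph_\omega)=\omega$; the cardinal $\kappa^{+}$ is not collapsed, so $\kappa^{+}=\aleph_{\omega+1}$; and the map in $N$ witnessing $\cf(\kappa^{+})^{N}<\kappa$ carries over, so $\cf(\aleph_{\omega+1})^{N^{*}}$ is the $N^{*}$-cofinality of an ordinal below $\aleph_\omega$, in particular $\omega\leq\cf(\aleph_{\omega+1})<\aleph_\omega$.

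That $N^{*}\models\lnot\AC_\omega$ would be witnessed, exactly as in \cite{Bull:SuccessiveLargeCardinals} and \cite{ApterHenle:RelativeConsistencyResultsViaStrongCompactness}, by the countable family whose $n$-th member is the nonempty set of all filters on $\Coll(\kappa_{n-1},{<}\kappa_n)$ lying in $N^{*}$ that arise from $G^{*}$ by applying a tail automorphism: any choice function for this family would reconstruct an object violating the symmetry restriction. Finally, a name for the $\theta$-sequence $\la A_\alpha\mid\alpha<\theta\ra$ of distinct subsets of $\kappa$ furnished by Theorem~\ref{maintheorem}(3) can be taken to mention only the component of the forcing that adds the $\theta$ subsets of $\kappa$; such a name is fixed by every tail automorphism, hence lies in $N^{*}$, and there it is a sequence of distinct subsets of $\aleph_\omega$ of length $\theta$, since ``being a sequence of distinct subsets of $\kappa$'' is absolute.

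The main difficulty is the bookkeeping in the choice of the symmetry group and filter: one must arrange that the collapse generics are symmetrized away (yielding $\lnot\AC_\omega$) while the $\theta$-sequence of subsets of $\kappa$ and the maps witnessing the cofinalities of $\kappa$ and $\kappa^{+}$ are retained, and that interleaving the collapses does not disturb the part of the Theorem~\ref{maintheorem} construction responsible for singularizing $\kappa^{+}$. Verifying that $\P^{*}$ still has the Prikry property and the needed closure and chain-condition properties below each $\kappa_n$---so that the cardinal structure of $N^{*}$ is precisely $\aleph_n=\kappa_{n-1}$ for $1\leq n<\omega$ and $\aleph_{\omega+1}=\kappa^{+}$---is the technical heart, but it is routine given the cited machinery.
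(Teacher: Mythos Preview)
Your approach differs substantially from the paper's. You propose a single-step construction: replace the supercompact Prikry forcing of Theorem~\ref{maintheorem} by supercompact Prikry forcing with interleaved L\'evy collapses, and build one symmetric submodel $N^*$ over $V$ using a combined automorphism group handling both the Prikry restrictions and the collapse tails. The paper instead proceeds in two separate steps. It first invokes Theorem~\ref{maintheorem} as a black box to obtain the model $N$ (in which $\kappa$ and $\kappa^+$ are already singular and the $\theta$-sequence of subsets of $\kappa$ already exists), and then forces \emph{over $N$} with the plain finite-support product $\P=\prod_{i<\omega}\P_i$, where $\P_0=\Coll(\omega,{<}\kappa_0)$ and $\P_i=\Coll(\kappa_{i-1},{<}\kappa_i)$ for $i\geq 1$, taking a symmetric submodel $M\subseteq N[G]$ that contains each initial block $G^*_n$ but not the sequence $\la G^*_n\mid n<\omega\ra$. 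The key lemma then says only that every set of ordinals in $M$ lies in some $N[G^*_n]$; since each $\P^*_n$ is well-orderable in $N$ with order type below $\kappa$, cardinal and cofinality preservation at and above $\kappa$ follows by the usual $\ZFC$ arguments carried out in $N$, and $\lnot\AC_\omega$ holds simply because $\la G^*_n\mid n<\omega\ra\notin M$.

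This separation of concerns eliminates precisely the bookkeeping you identify as the main difficulty: no interaction between the collapses and the permutation argument of Lemma~\ref{keylemma} ever arises, and there is no need to verify a Prikry property or chain condition for a hybrid forcing. Your one-step route is plausible in principle, but it re-entangles what the paper deliberately decouples, and your sketch does not actually verify the delicate point---that the combined symmetric model simultaneously retains each $r_\delta$ and each initial collapse block while excluding both the full Prikry sequence and the full collapse sequence---on which everything rests.
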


We note that in Theorem \ref{theoremcollapse1}, $\aleph_1$ and $\aleph_2$ can be replaced with $\delta$ and $\delta^+$ respectively, where $\delta$ is the successor of any ground model regular cardinal less than $\kappa$. Also, in Theorem \ref{theoremcollapseomega}, we note that $\aleph_\omega$ and $\aleph_{\omega+1}$ can be replaced by $\eta$ and $\eta^+$ respectively, where $\eta<\kappa$ can be any reasonably defined singular limit cardinal of cofinality $\omega$. We will return to these issues later.

Let us now give a brief outline of the rest of the paper. In Section \ref{sectionpreliminaries}, we include a definition of the basic forcing notion we will use and outline its important properties. In Section \ref{sectionmaintheorem}, we give a detailed proof of Theorem \ref{maintheorem}. In Section \ref{sectioncollapse}, we sketch the proofs of Theorem \ref{theoremcollapse1} and Theorem \ref{theoremcollapseomega}. In Section \ref{sectionadditional}, we discuss a result in which we separate the lengths of distinct subsets of consecutive singular cardinals, and we also pose some open questions.

\section{Preliminaries}\label{sectionpreliminaries}

In this section, we will briefly discuss the various forcing notions used. If $\kappa$ is a regular cardinal and $\lambda$ is an ordinal, $\Add(\kappa,\lambda)$ denotes the standard partial order for adding $\lambda$ Cohen subsets to $\kappa$. If $\lambda>\kappa$ is an inaccessible cardinal, $\Coll(\kappa,{<}\lambda)$ is the standard partial order for collapsing $\lambda$ to $\kappa^+$ and all cardinals in the interval $[\kappa,\lambda)$ to $\kappa$.
For further details, we refer the reader to \cite{Jech:Book}. For a given partial order $\P$ and a condition $p\in\P$, we define $\P/p:=\{q\in\P\mid q\leq p\}$. If $\varphi$ is a statement in the forcing language associated with $\P$ and $p\in\P$, we write $p \parallel \varphi$ if and only if $p$ decides $\varphi$.

We will now review the definition and important features of supercompact
Prikry forcing and refer the reader to
\cite{Gitik:Handbook} or \cite{Apter:SuccessorsOfSingularCardinalsAnd}
for details. Suppose $\kappa$ is $\lambda$-supercompact and
that $U$ is a normal fine measure on $P_\kappa\lambda$
satisfying the Menas partition property
(see \cite{Menas:Paper}
for a definition and a proof of
the fact that if $\kappa$ is $2^\lambda$-supercompact,
then $P_\kappa\lambda$ has a normal fine measure
with this property).
For $P,Q\in P_\kappa\lambda$ we say that $P$ is \emph{strongly included} in $Q$ and write $P\strongsubset Q$ if $P\subseteq Q$ and $\ot(P)<\ot(Q\cap\kappa)$. We define \emph{supercompact Prikry forcing} $\P$ to be the set of all ordered tuples of the form $\la P_1,\ldots, P_n, A\ra$ such that
\begin{enumerate}
\item $P_1,\ldots,P_n$ is a finite $\strongsubset$-increasing sequence of elements of $P_\kappa\lambda$,
\item $A\in U$, and
\item for every $Q\in A$, $P_n\strongsubset Q$.
\end{enumerate}
Given $\la P_1,\ldots,P_n,A\ra,\la Q_1,\ldots,Q_m,B\ra\in\P$ we say that $\la P_1,\ldots,P_n,A\ra$ \emph{extends} $\la Q_1,\ldots,Q_m,B\ra$ and write $\la P_1,\ldots,P_n,A\ra\leq \la Q_1,\ldots,Q_m,B\ra$ if and only if 
\begin{enumerate}
\item $n\geq m$,
\item for each $k\leq m$, $P_k=Q_k$,
\item $A\subseteq B$, and
\item $\{P_{m+1},\ldots,P_n\}\subseteq B$.
\end{enumerate}

Since any two conditions of the form $\la P_1\ldots,P_n,A\ra$ and $\la P_1,\ldots, P_n, B\ra$ in $\P$ are compatible, one may easily show that $\P$ is $(\lambda^{<\kappa})^+$-c.c. Since $U$ satisfies the Menas partition property, it follows that forcing with $\P$ does not add new bounded subsets to $\kappa$. In the forcing extension by $\P$, $\kappa$ has cofinality $\omega$, and if $\lambda>\kappa$ then certain cardinals will be collapsed according to the following.
\begin{lemma}\label{lemmaprikryforcing}
Every $\gamma\in [\kappa,\lambda]$ of cofinality at least $\kappa$ (in V) changes its cofinality to $\omega$ in $V[G]$. Moreover, in $V[G]$, every cardinal in $(\kappa,\lambda]$ is collapsed to have size $\kappa$. 
\end{lemma}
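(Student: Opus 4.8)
The plan is to argue directly from the generic sequence. Forcing with $\P$ adds a $\strongsubset$-increasing $\omega$-sequence $\la P_n\mid n<\omega\ra$ (the concatenation of the stems appearing in $G$), and the one fact about this sequence that drives everything is that $\bigcup_{n<\omega}P_n=\lambda$. I would prove this by a density argument: given $\alpha<\lambda$, the set of conditions whose stem contains $\alpha$ is dense, since any $\la Q_1,\dots,Q_m,B\ra$ can be extended by choosing $Q_{m+1}$ inside the measure-one set $B\cap\{Q:\alpha\in Q\}\cap\{Q:Q_m\strongsubset Q\}$ --- here I use fineness of $U$ together with the standard fact that $\{Q:P\strongsubset Q\}\in U$ for every fixed $P\in P_\kappa\lambda$ --- and then shrinking the fourth coordinate appropriately. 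Two further trivial observations: each $P_n$ lies in $V$, and since $|P_n|<\kappa$ and $\kappa$ is a cardinal, the order type $\rho_n:=\ot(P_n)$ is an ordinal below $\kappa$.

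For the cofinality statement, let $\gamma\in[\kappa,\lambda]$ with $\cf^V(\gamma)\ge\kappa$; then $\gamma$ is a limit ordinal. In $V[G]$ consider the sequence $\la\sup(P_n\cap\gamma)\mid n<\omega\ra$. It is non-decreasing because the $P_n$ increase; each term is below $\gamma$ because $|P_n\cap\gamma|<\kappa\le\cf^V(\gamma)$ forces $P_n\cap\gamma$ to be bounded in $\gamma$; and its supremum is $\gamma$ because $\bigcup_nP_n=\lambda\supseteq\gamma$, so each ordinal below $\gamma$ belongs to some $P_n$. A non-decreasing $\omega$-sequence that is cofinal in the limit ordinal $\gamma$ witnesses $\cf^{V[G]}(\gamma)=\omega$. (Taking $\gamma=\kappa$ recovers the already-noted fact that $\cf^{V[G]}(\kappa)=\omega$.)

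For the collapsing statement, for each $n$ let $e_n\colon\rho_n\to P_n$ be the increasing enumeration of $P_n$; each $e_n\in V$, and $n\mapsto e_n$ lies in $V[G]$ because $n\mapsto P_n$ does. Define $F\colon\kappa\times\omega\to\lambda$ in $V[G]$ by $F(\xi,n)=e_n(\xi)$ if $\xi<\rho_n$ and $F(\xi,n)=0$ otherwise. Then the range of $F$ is $\bigcup_nP_n=\lambda$, so $|\lambda|^{V[G]}\le|\kappa\times\omega|=\kappa$. Since $\P$ adds no new bounded subsets of $\kappa$, $\kappa$ is still a cardinal in $V[G]$; hence every cardinal $\mu$ with $\kappa<\mu\le\lambda$ satisfies $\kappa\le|\mu|^{V[G]}\le\kappa$, i.e.\ is collapsed to have size exactly $\kappa$.

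The argument is essentially routine once the two ingredients $\bigcup_nP_n=\lambda$ and $\ot(P_n)<\kappa$ are in hand; the only place calling for a little care is the density argument for $\bigcup_nP_n=\lambda$, namely the verification that a condition can always be extended by an element of $P_\kappa\lambda$ that strongly contains the current last coordinate \emph{and} a prescribed ordinal, which rests on fineness of $U$ and on $\{Q:P\strongsubset Q\}\in U$. The subtler structural point --- that no bounded subsets of $\kappa$ are added, so that $\kappa$ survives as a cardinal and the cardinals in $(\kappa,\lambda]$ collapse precisely to $\kappa$ rather than lower --- has already been granted to us via the Menas partition property.
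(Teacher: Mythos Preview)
Your argument is correct and is essentially the standard proof. Note, however, that the paper does not actually prove Lemma~\ref{lemmaprikryforcing}: it is stated in the preliminaries as a known fact about supercompact Prikry forcing, with the reader referred to \cite{Gitik:Handbook} and \cite{Apter:SuccessorsOfSingularCardinalsAnd} for details. So there is no ``paper's own proof'' to compare against; your write-up simply supplies the routine verification that the cited references contain. The two ingredients you isolate---$\bigcup_n P_n=\lambda$ via fineness and density, together with $|P_n|<\kappa$---are exactly what one uses, and your handling of both the cofinality claim (bounding $P_n\cap\gamma$ below $\gamma$ via $\cf^V(\gamma)\ge\kappa$) and the collapse (the surjection $\kappa\times\omega\twoheadrightarrow\lambda$) is clean.
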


\section{The Proof of Theorem \ref{maintheorem}}\label{sectionmaintheorem}

Now we will begin the proof of Theorem \ref{maintheorem}. We note that our proof amalgamates the methods used in \cite{ApterHenle:RelativeConsistencyResultsViaStrongCompactness} with those of \cite{Apter:SuccessorsOfSingularCardinalsAnd}.

\begin{proof}[Proof of Theorem \ref{maintheorem}]

Suppose $\kappa$ is supercompact and $\theta$ is an ordinal in some initial model $V_0$ of $\ZFC+\GCH$. We will show that there is a forcing extension of $V_0$ that has a symmetric inner model $N$ in which $\kappa$ and $\kappa^+$ are both singular with $\cf(\kappa)^N=\omega$ and $\cf(\kappa^+)^N<\kappa$, and there is a $\theta$-sequence of subsets of $\kappa$. By first forcing the supercompactness of $\kappa$ to be Laver indestuctible, as in \cite{Laver:MakingSupercompactnessIndestructible}, and then forcing with $\Add(\kappa,\theta)$, we may assume without loss of generality that $\kappa$ is supercompact and $2^\kappa=\theta$ in a forcing extension $V$ of $V_0$. Let $\lambda$ be a cardinal such that $\kappa<\lambda$ and $\cf(\lambda)^V<\kappa$. In $V$, let $\P$ be the supercompact Prikry forcing relative to some normal fine measure $U$ on $P_\kappa\lambda$ satisfying the Menas partition property. Let $G$ be $V$-generic for $\P$ and let $\langle P_n\mid n<\omega\rangle$ be the supercompact Prikry sequence associated with $G$; that is, $\langle P_n\mid n<\omega\rangle$ is the sequence of elements of $P_\kappa\lambda$ such that for each $n<\omega$, there is an $A\in U$ with $(P_1,\ldots,P_n,A)\in G$.

By Lemma \ref{lemmaprikryforcing}, it follows that in $V[G]$, the cofinality of $\kappa$ is $\omega$, and every ordinal in the interval $(\kappa,\lambda]$ has size $\kappa$. Furthermore, since the supercompact Prikry forcing adds no new bounded subsets to $\kappa$, it follows that $\kappa$ remains a cardinal in $V[G]$. We will now define a symmetric inner model $N\subseteq V[G]$ in which $\kappa^+=\lambda$, and we will argue that the conclusions of Theorem \ref{maintheorem} hold in $N$.


In order to define $N$, we need to discuss a way of restricting
the forcing conditions in $\P$. First note that,
as in \cite{Apter:SuccessorsOfSingularCardinalsAnd},
for $\delta\in[\kappa,\lambda]$ a regular cardinal,
$U\restrict\delta:=U\cap P(P_\kappa\delta)$ is a normal fine measure on
$P_\kappa\delta$ satisfying the Menas partition property.
Let $\P_{U\restrict\delta}$ denote the supercompact Prikry forcing associated with $U\restrict\delta$. If $p=\la Q_1,\ldots Q_n,A\ra\in \P$ we define $p\restrict\delta:=\la Q_1\cap\delta,\ldots,Q_n\cap\delta,A\cap P_\kappa\delta\ra$ and note that $p\in \P_{U\restrict\delta}$. If $A\in P_\kappa\lambda$ we define $A\restrict\delta:=A\cap P_\kappa\delta$. The Mathias genericity criterion \cite{Mathias:OnSequencesGenericInTheSenseOfPrikry} for supercompact Prikry forcing yields that $r_\delta:=\langle P_n\cap\delta\mid n<\omega\rangle$ generates a $V$-generic filter for $\P_{U\restrict\delta}$. Indeed, $G\restrict\delta:=G\cap \P_{U\restrict\delta}$ is the generic filter for $\P_{U\restrict\delta}$ generated by $r_\delta$. $N$ is now defined informally as the smallest model of $\ZF$ extending $V$ which contains $r_\delta$ for each regular cardinal $\delta\in[\kappa,\lambda)$ but not the full supercompact Prikry sequence $r:=\langle P_n\mid n<\omega\rangle$.

We may define $N$ more formally as follows. Let $\mathcal{L}$ be the forcing language associated with $\P$ and let $\mathcal{L}_1\subseteq\mathcal{L}$ be the ramified sublanguage containing symbols $\check{v}$ for each $v\in V$, a unary predicate $\check{V}$ (interpreted as $\check{V}(\check{v})$ if and only if $v\in V$), and symbols $\dot{r}_\delta$ for each regular cardinal $\delta\in[\kappa,\lambda)$. We define $N$ inductively inside $V[G]$ as follows:
\begin{align*}
N_0&=\emptyset, \\ 
N_\delta&=\bigcup_{\alpha<\delta}N_\alpha\textrm{ for $\delta$ a limit ordinal}, \\ 
N_{\alpha+1}&=\{x\subseteq N_\alpha\mid \textrm{$x$ can be defined over $\langle N_\alpha,\in,c\rangle_{c\in N_\alpha}$} \\
& \hspace{1in}\textrm{using a forcing term $\tau\in\mathcal{L}_1$ of rank $\leq\alpha$}\}, \textrm{ and}\\ 
N&=\bigcup_{\alpha\in\ORD} N_\alpha.
\end{align*}

Standard arguments show that $N\models \ZF$.
As usual, each $\check{v}$ for $v\in V$ may be chosen so
as to be invariant under 
any isomorphism $\Psi : \P/p \to \P/q$ for
$p, q \in \P$.
Further, terms $\tau$ mentioning only $\dot{r}_\delta$ may be chosen so
as to be invariant under 
any isomorphism $\Psi : \P/p \to \P/q$ which preserves
the meaning of $r_\delta$.

The following lemma provides the key to showing that $N$ has the desired features.

\begin{lemma}\label{keylemma}
If $x\in N$ is a set of ordinals, then for some regular cardinal $\delta\in [\kappa,\lambda)$, $x\in V[r_\delta]$.
\end{lemma}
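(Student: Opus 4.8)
The plan is to analyze how an arbitrary set of ordinals $x \in N$ is named, and to show that any such name can only "see" the restricted Prikry sequences $r_\delta$ for $\delta < \lambda$, never the full sequence $r$. First I would fix $x \in N$ a set of ordinals; by the definition of $N$, there is a term $\tau \in \mathcal{L}_1$ and a condition $p \in G$ such that $x = \tau^G$ and $\tau$ mentions only the symbols $\check v$ (for $v \in V$) together with finitely many $\dot r_{\delta_1}, \ldots, \dot r_{\delta_k}$ for regular cardinals $\delta_1 < \cdots < \delta_k$ in $[\kappa,\lambda)$. Let $\delta = \delta_k$ (or $\delta = \kappa$ if $k = 0$). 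Since each $r_{\delta_i}$ is definable from $r_\delta$ (indeed $r_{\delta_i} = \langle Q \cap \delta_i \mid Q \in r_\delta\rangle$), I would replace $\tau$ by an equivalent term mentioning only $\check v$'s and $\dot r_\delta$; call it $\sigma$. So it suffices to show $\sigma^G \in V[r_\delta]$.

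The heart of the argument is a homogeneity/factoring fact for $\P$ relative to $\P_{U\restrict\delta}$. The forcing $\P$ factors as $\P_{U\restrict\delta} * \dot{\mathbb{Q}}$, where $\dot{\mathbb{Q}}$ is the quotient forcing that extends the sequence of the $P_n$'s above $\delta$; concretely, once $r_\delta = \langle P_n \cap \delta \mid n < \omega\rangle$ is fixed, the remaining generic information is the choice of the "upper parts" $\langle P_n \setminus \delta \mid n < \omega\rangle$. The key observation is that $\sigma$, mentioning only ground-model symbols and $\dot r_\delta$, is invariant under automorphisms of $\P$ that fix $\P_{U\restrict\delta}$ pointwise and preserve the meaning of $\dot r_\delta$ — this is exactly the invariance property flagged just before the lemma statement. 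I would then argue that the quotient $\dot{\mathbb{Q}}$ (over $V[r_\delta]$) is sufficiently homogeneous: given any two conditions in $\dot{\mathbb{Q}}$ with the same "lower part" $r_\delta$, there is an isomorphism between the cones below them that fixes everything over $V[r_\delta]$ and preserves $\dot r_\delta$. Since $\sigma$ is invariant under such isomorphisms, the value of each statement "$\check\alpha \in \sigma$" is decided by $\P_{U\restrict\delta}$ alone — that is, already by $r_\delta$. Hence $x = \sigma^G$ is definable in $V[r_\delta]$ from the sequence $r_\delta$ and parameters in $V$, so $x \in V[r_\delta]$.

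To make the homogeneity step precise, I would spell out the automorphisms of $\P$. Given a condition $p = \la P_1, \ldots, P_n, A\ra$ whose lower part $p\restrict\delta$ lies in $G\restrict\delta$, and a second such condition $q$ with $q\restrict\delta = p\restrict\delta$, one builds an isomorphism $\Psi : \P/p \to \P/q$ by acting trivially on the $\restrict\delta$ coordinates and by a suitable map on the "above $\delta$" coordinates — this uses that the definition of $\P$ treats the part of a condition below $\delta$ and the part at or above $\delta$ somewhat independently, together with fineness/normality of $U$ to match up the measure-one sets. One then checks $\Psi$ fixes every $\check v$ and preserves $\dot r_\delta$, so $\Psi$ fixes $\sigma$; a standard argument then shows $p \forces \check\alpha \in \sigma$ iff $q \forces \check\alpha \in \sigma$ whenever $p, q$ share their $\restrict\delta$-part, whence $p\restrict\delta$ already decides the statement. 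I expect the main obstacle to be exactly this step: verifying that the "upper part" quotient genuinely has enough automorphisms fixing the lower part — in the supercompact Prikry setting the conditions carry measure-one sets $A \in U$ rather than just finite stems, so one must be careful that the isomorphism can be arranged to respect the strong-inclusion constraints (clauses (1) and (3) in the definition of $\P$) while still ignoring the data of the upper parts of the $P_n$'s. The Mathias genericity criterion and the fact, already noted in the excerpt, that $r_\delta$ generates $G\restrict\delta$ should let one reduce everything to a statement about the generic object rather than about particular conditions.
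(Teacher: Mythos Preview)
Your proposal is correct and takes essentially the same approach as the paper: reduce to a term mentioning a single $\dot r_\delta$, then use cone isomorphisms in $\P$ that preserve the meaning of $\dot r_\delta$ to show that $G\restrict\delta$ alone determines membership in $x$. The paper dispenses with the factoring language and works directly, defining $y = \{\alpha : \exists q \le p\,(q\restrict\delta \in G\restrict\delta \text{ and } q \Vdash \alpha \in \tau)\}$ and building $\Psi$ as a finite permutation of $P_\kappa\lambda$ swapping stem elements after first extending the two relevant conditions so that their $\delta$-stems agree---this extension step is exactly the ``main obstacle'' you anticipated, and it goes through as written.
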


\begin{proof}

Let us note that the following proof of Lemma \ref{keylemma} blends ideas found in the proofs of \cite[Lemma 1.5]{Apter:SuccessorsOfSingularCardinalsAnd} and \cite[Lemma 2.1]{ApterHenle:RelativeConsistencyResultsViaStrongCompactness}. Let $\tau$ be a term in $\mathcal{L}_1$ for $x$. Suppose $\beta$ is an ordinal, $p\forces_{\P,V}\tau\subseteq\beta$, and $p\in G$. Since $\tau\in\mathcal{L}_1$, it follows that $\tau$ mentions finitely many terms of the form $\dot{r}_\delta$. Without loss of generality, we may assume that $\tau$ mentions a single $\dot{r}_\delta$. We will show that $x\in V[r_\delta]$. Let
$$y:=\{\alpha<\beta\mid\exists q\leq p\ (q\restrict\delta\in G\restrict\delta \textrm{ and } q\forces_{\P,V}\alpha\in\tau)\}.$$
We will show that $x=y$. Since it is clear that $y\in V[r_\delta]$, this will suffice. Suppose $\alpha\in x$, and choose $p'\leq p$ with $p'\in G$ such that $p'\forces_{\P,V}\alpha\in \tau$. Since $p'\restrict \delta\in G\restrict \delta$, we conclude that $\alpha\in y$. Thus, $x\subseteq y$. Now suppose $\alpha\in y$, and let $q\leq p$ with $q\restrict\delta\in G\restrict\delta$ and $q\forces_{\P,V}\alpha\in\tau$. There is a $q'\in G$ such that $q'\decides\alpha\in\tau$. If $q'\forces\alpha\in\tau$, then $\alpha\in x$ and we are done; thus we assume that $q'\forces\alpha\notin\tau$. Write $q=\langle Q_1,\ldots,Q_l, A\rangle$ and $q'=\langle Q_1',\ldots, Q'_m, A'\rangle$, where without loss of generality we assume that $l<m$. Since $q'\restrict\delta,q\restrict\delta\in G\restrict\delta$ and $l<m$, we know that $Q_i\cap\delta=Q_i'\cap\delta$ for $1\leq i \leq l$. Furthermore, there is some $q^*:=\langle Q_0\cap\delta,\ldots, Q_l\cap\delta,R^*_{l+1},\ldots,R^*_m,A^*\rangle\in G\restrict\delta$ extending $q\restrict\delta$ with $R^*_i=Q'_i\cap\delta$ for $l+1\leq i \leq m$ (to find such a condition one could just take a common extension of $q'\restrict\delta$ and $q\restrict\delta$ in $G\restrict\delta$ and then obtain the appropriate stem by throwing unwanted points back into the measure one set).
Now let us argue that there is a $q''\leq q$ in $\P$
such that $q''=\langle Q_0,\ldots,Q_l,S_{l+1},\ldots,S_m,A''\rangle$, and for $l+1\leq i\leq m$ we have $S_i\cap\delta=R^*_i= Q'_i\cap\delta$. Since $q^*\leq_{\P\restrict\delta} q\restrict\delta$, it follows by the definition of $\leq_{\P\restrict\delta}$ that for $l+1\leq i\leq m$, $R_i^*\in A\restrict\delta=A\cap P_\kappa\delta$, which implies $R^*_i= S_i\cap\delta$ for some $S_i\in A$. Also by the definition of $\leq_{\P\restrict\delta}$, we have $A^*\subseteq A\restrict\delta$, and since $q^*\in \P\restrict\delta$, we have $A^*=B\restrict\delta$ for some $B\in U$. Now let $A'':=A'\cap A\cap B$ and notice that $A''\restrict\delta\subseteq A^*$. Indeed we have $q''\restrict\delta\leq_{\P\restrict\delta} q^*$ and $q''\leq_\P q$. We let $q'''$ be the condition extending $q'$ defined by $q''':=\la Q_1',\ldots, Q_m', A''\ra$.

Now we define an isomorphism from $\P/q''$ to $\P/q'''$ that sends $q''$ to $q'''$ and fixes $\tau$. Let $\Psi:P_\kappa\lambda\to P_\kappa\lambda$ be the permutation defined by $\Psi(Q_i)=Q_i'$ and $\Psi(Q_i')=Q_i$ for $1\leq i\leq l$, $\Psi(S_i)=Q_i'$ and $\Psi(Q_i')=S_i$ for $l+1\leq i\leq m$, and $\Psi$ is the identity otherwise. This permutation induces a map $\Psi:\P/p''\to\P/p'''$ defined by $\Psi(\langle P_1,\ldots, P_n,C\rangle)=\langle\Psi(P_1),\ldots,\Psi(P_n),\Psi"C\rangle$. Note that since $\Psi$ fixes all but finitely many elements of $P_\kappa\lambda$, it follows that $\Psi"C\in U$. One may check that $\Psi$ is an isomorphism, and it easily follows that $\Psi(q'')=\langle Q_1',\ldots,Q_m',\Psi"A''\rangle =\langle Q_1',\ldots,Q_m',A''\rangle =q'''$.
Furthermore, since $\tau$ mentions only $\dot{r}_\delta$,
since $$\langle Q_1\cap\delta,\ldots, Q_l\cap\delta,S_{l+1}\cap\delta, \ldots, S_m\cap\delta\rangle=\langle Q_1'\cap\delta,\ldots,Q_m'\cap\delta\rangle,$$
and since any condition $\langle Q_1,\ldots, Q_l, S_{l+1},\ldots,S_m, S_{m+1},\ldots, S_k, D\rangle$ extending $q''$ must have $S_{i}\notin\{Q_1,\ldots,Q_l,S_{l+1},\ldots, S_m, Q'_1,\ldots, Q'_m\}$ for $m+1\leq i\leq k$, it follows that $\Psi$ does not affect the meaning of $\tau$.
By extending $\Psi$ to the relevant
$\P$-terms, since $q''\forces \alpha\in\tau$, we have $\Psi(q'')\forces\Psi(\alpha)\in\Psi(\tau)$. This implies $\Psi(q'')=q'''\forces\alpha\in\tau$. This contradicts the fact that $q'''\leq q'\forces\alpha\notin\tau$.
\end{proof}

Since $V\subseteq N\subseteq V[G]$ and $\P$ does not add bounded subsets to $\kappa$, it follows that $N$ and $V$ have the same bounded subsets of $\kappa$. Thus, in $N$, $\kappa$ is a limit of inaccessible cardinals, and hence is also a strong limit cardinal.

We will now use Lemma \ref{keylemma} to show that $\lambda$, which was collapsed to have size $\kappa$ in $V[G]$, is a cardinal in $N$, and furthermore, $(\kappa^+)^N=\lambda$ and $\cf(\lambda)^N=\cf(\lambda)^V$.

Let us argue that if $\gamma\geq\lambda$ is a cardinal in $V$, then $\gamma$ remains a cardinal in $N$. Suppose for a contradiction that $\gamma$ is not a cardinal in $N$. Then there is a bijection from some $\alpha<\gamma$ to $\gamma$ which is coded by a set of ordinals in $N$. By Lemma \ref{keylemma}, there is a regular cardinal $\delta\in(\kappa,\lambda)$ such that the code
and hence the bijection are in $V[G\restrict\delta]$. This implies that $\gamma$ is not a cardinal in $V[G\restrict\delta]$. We will obtain a contradiction by using the chain condition of $\P_{U\restrict\delta}$ to show that $\gamma$ is a cardinal in $V[G\restrict\delta]$. Indeed, we will show that even though $\GCH$ may fail at $\kappa$ in $V$, the supercompact Prikry forcing $\P_{U\restrict\delta}$ is $\delta^+$-c.c. in $V$. As mentioned in Section \ref{sectionpreliminaries}, $\P_{U\restrict\delta}$ is $(\delta^{<\kappa})^+$-c.c. in $V$. Since $\GCH$ holds in $V_0$ we have $(\delta^{<\kappa})^{V_0}=\delta$, and since $\Add(\kappa,\theta)$ preserves cardinals and adds no sequences of ordinals of length less than $\kappa$, we conclude that $(\delta^{<\kappa})^V=(\delta^{<\kappa})^{V_0}=\delta$. This shows that $\P_{U\restrict\delta}$ is $\delta^+$-c.c. in $V$, and thus $\gamma$ is a cardinal in $V[G\restrict\delta]$, a contradiction.

For each regular cardinal $\delta\in(\kappa,\lambda)$, we have $V[G\restrict\delta]\subseteq N$, and this implies that $\cf^N(\kappa)=\omega$ and that every ordinal in $(\kappa,\lambda)$ which is a cardinal in $V$ is collapsed to have size $\kappa$ in $N$. Thus, we have $(\kappa^+)^N=\lambda$. Furthermore, since $N$ and $V$ agree on bounded subsets of $\kappa$, we see that $\cf^N(\lambda)=\cf^V(\lambda)<\kappa$. This shows that $\cf^N((\kappa^+)^N)=\cf^V(\lambda)<\kappa$,  and this implies that $N$ satisfies $\lnot \AC$. Since $V\subseteq N$, and since $(2^\kappa=\theta)^V$, it follows that there is a $\theta$-sequence of distinct subsets of $\kappa$ in $N$.

This completes the proof of Theorem \ref{maintheorem}.
\end{proof}

Let us emphasize: The fact that $\GCH$ can potentially
fail at $\kappa$ in $V$, depending on the size of $\theta$,
together with the cardinal preservation to $N$,
are the features of our construction that
set the results of this paper
apart from those previously discussed in the literature.

\section{The Proofs of Theorems \ref{theoremcollapse1} and \ref{theoremcollapseomega}}\label{sectioncollapse}

In this section, we sketch the proofs of Theorems \ref{theoremcollapse1} and \ref{theoremcollapseomega}. We begin with Theorem \ref{theoremcollapse1}.



\begin{proof}[Proof of Theorem \ref{theoremcollapse1}]

Suppose
the model $N$ is such that $\cf(\kappa)^N=\omega$, $\cf(\kappa^+)^N<\kappa$,
and there is, in $N$, a sequence of distinct subsets of
$\kappa$ of length $\theta$. We will now argue that in a symmetric
inner model $M$ of a forcing extension of $N$,
we have $\cf(\aleph_1)=\cf(\aleph_2)=\omega$,
and there is a sequence of distinct subsets of $\aleph_1$
of length $\theta$.

Working in $N$, let $\langle\kappa_n\mid n<\omega\rangle$ be a sequence of inaccessible cardinals less than $\kappa$ which is cofinal in $\kappa$. Let $\P:=\Coll(\omega,{<}\kappa)$, and let $G$ be $N$-generic for $\P$. Let $\P_n:=\Coll(\omega,{<}\kappa_n)$. Standard arguments show that $G_n:=G\cap \P_n$ is $N$-generic for $\P_n$ (see \cite[proof of Theorem 2]{Apter:SuccessorsOfSingularCardinalsAnd}).



As in the proof of Theorem \ref{maintheorem}, we let $M$ be the least model of $\ZF$ extending $N$ containing each $G_n$ but not $G$. More formally, let $\mathcal{L}_2$ be the ramified sublanguage of the forcing language associated with $\P$ containing terms $\check{x}$ for each $x\in N$, a unary predicate $\check{N}$ for $N$, and canonical terms $\dot{G}_n$ for each $G_n$. We now define $M$ inductively inside $N[G]$ as follows:

\begin{align*}
M_0&=\emptyset, \\ 
M_\delta&=\bigcup_{\alpha<\delta}M_\alpha\textrm{ for $\delta$ a limit ordinal}, \\ 
M_{\alpha+1}&=\{x\subseteq M_\alpha\mid \textrm{$x$ can be defined over $\langle M_\alpha,\in,c\rangle_{c\in M_\alpha}$} \\
& \hspace{1in}\textrm{using a forcing term $\tau\in\mathcal{L}_2$ of rank $\leq\alpha$}\}, \textrm{ and}\\ 
M&=\bigcup_{\alpha\in\ORD} M_\alpha.
\end{align*}

As before, standard arguments show that $M\models\ZF$. Since $M$ contains $G_n$ for each $n$, it follows that cardinals in $[\omega,\kappa)$ are collapsed to have size $\omega$ and hence $\aleph_1^M\geq\kappa$.
However, standard arguments
(see \cite[Lemma 6.2 and 5.3]{Bull:SuccessiveLargeCardinals})
also show that if $x\in M$ is a set of ordinals,
then $x\in N[G_n]$ for some $n<\omega$.
Since $\Coll(\omega,{<}\kappa_n)$ is canonically well-orderable
in $N$ with order type $\kappa_n$,
the usual proofs show that cardinals and cofinalities
greater than or equal to $\kappa$ are preserved to $N[G_n]$.
Since $\kappa = \aleph^M_1$,
$\cf(\aleph_1)^M=\cf(\aleph_2)^M=\omega$.
It therefore follows that $M\models\lnot\AC_\omega$.
Thus, $M$ is the desired model.

\end{proof}

We remark here that the above proof may be easily adapted to collapse $\kappa$ and $\kappa^+$ to $\delta$ and $\delta^+$ respectively, where $\delta$ is the successor of a regular cardinal, say $\delta=\mu^+$. The main difference between the above proof of Theorem \ref{theoremcollapse1}, and the proof in this more general setting, is that the restricted version of the collapse forcing, call it $\P'_n:=\Coll(\mu,{<}\kappa_n)$, is no longer canonically well-orderable. However, since $N$ and $V$ have the same bounded subsets of $\kappa$, and $V\subseteq N$, it follows that $\P_n'$ can be well-ordered in both $V$ and $N$ with order type less than $\kappa$. In this way, we obtain a model $M$ of $\ZF+\lnot\AC$ in which $\cf(\delta)=\cf(\delta^+)=\omega$ and in which there is a sequence of distinct subsets of $\delta$ of length $\theta$.


Below we present a sketch of our proof of Theorem \ref{theoremcollapseomega}.
As in the above proof sketch of Theorem \ref{theoremcollapse1}, we will argue that in a symmetric inner model $M$ of a forcing extension of $N$, we have $\omega\leq\cf(\aleph_{\omega+1})<\aleph_\omega$, and there is a sequence of distinct subsets of $\aleph_\omega$ of length $\theta$.

\begin{proof}[Proof of Theorem \ref{theoremcollapseomega}]


Let $N$ be constructed so that $\cf(\kappa)^N=\omega$, $\cf(\kappa^+)^N<\kappa$, and there is a sequence of distinct subsets of $\kappa$ of length $\theta$ in $N$. Let $\langle\kappa_i\mid i<\omega\rangle$ be a sequence of inaccessible cardinals cofinal in $\kappa$. Let $\P_0:=\Coll(\omega,{<}\kappa_0)$ and $\P_i:=\Coll(\kappa_{i-1},{<}\kappa_i)$ for $i\in[1,\omega)$. Let $\P:=\prod_{i<\omega}\P_i$, where the product has finite support. For each $n<\omega$, we may factor $\P$ as $\P\cong\P^*_n\times\P^n$, where $\P^*_n:=\prod_{i\in[0,n]}\P_i$ and $\P^n:=\prod_{i\in[n+1,\omega)}\P_i$. Let $G\cong G^*_n\times G^n$ be $N$-generic for $\P$.
As in \cite[proof of Theorem 2]{Apter:SuccessorsOfSingularCardinalsAnd},
each $G_n^*$ is $N$-generic for $\P_n^*$. As before,
we let $M$ be the least model of $\ZF$ extending $N$
containing each $G^*_n$ but not $\la G_n^*\mid n<\omega\ra$.
More formally, let $\mathcal{L}_3$ be the ramified sublanguage
of the forcing language associated with $\P$ containing
terms $\check{x}$ for each $x\in N$,
a unary predicate $\check{N}$ for $N$, and canonical terms $\dot{G}^*_n$
for each $G_n^*$. We now define $M$ inductively inside $N[G]$ as follows:

\begin{align*}
M_0&=\emptyset, \\ 
M_\delta&=\bigcup_{\alpha<\delta}M_\alpha\textrm{ for $\delta$ a limit ordinal}, \\ 
M_{\alpha+1}&=\{x\subseteq M_\alpha\mid \textrm{$x$ can be defined over $\langle M_\alpha,\in,c\rangle_{c\in M_\alpha}$} \\
& \hspace{1in}\textrm{using a forcing term $\tau\in\mathcal{L}_3$ of rank $\leq\alpha$}\},\textrm{ and}\\ 
M&=\bigcup_{\alpha\in\ORD} M_\alpha.
\end{align*}

Since $G^*_n\in M$ for each $n<\omega$, it follows that in $M$, $\aleph_{\omega}\geq\kappa$ and hence $\aleph_{\omega+1}\geq(\kappa^+)^N$. To show that $\kappa=\aleph_\omega$ and $(\kappa^+)^N=\aleph_{\omega+1}$ in $M$, we will use the following lemma.

\begin{lemma}\label{lemmacollapseomega}
If $x$ is a set of ordinals in $M$, then $x\in N[G^*_n]$ for some $n<\omega$.
\end{lemma}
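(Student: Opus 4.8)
The plan is to mimic the proof of Lemma \ref{keylemma}, but in the simpler combinatorial setting of the finite-support product of collapses $\P=\prod_{i<\omega}\P_i$. So let $x\subseteq\beta$ be a set of ordinals in $M$, fix a term $\tau\in\mathcal{L}_3$ for $x$, and fix $p\in G$ with $p\forces_{\P,N}\tau\subseteq\check\beta$. Since $\tau$ is a term of $\mathcal{L}_3$, it mentions only finitely many of the canonical terms $\dot G_m^*$, and because the $G_m^*$ form a $\subseteq$-increasing chain we may assume $\tau$ mentions a single $\dot G_n^*$. Then set
$$y:=\{\alpha<\beta\mid \exists q\leq p\ (q\restrict\P_n^*\in G_n^* \textrm{ and } q\forces_{\P,N}\check\alpha\in\tau)\},$$
where $q\restrict\P_n^*$ is the obvious restriction of $q$ to the first $n+1$ coordinates. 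Clearly $y\in N[G_n^*]$, so it suffices to show $x=y$.

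The inclusion $x\subseteq y$ is immediate: if $\alpha\in x$, pick $p'\in G$ with $p'\leq p$ and $p'\forces\check\alpha\in\tau$; then $p'\restrict\P_n^*\in G_n^*$, so $\alpha\in y$. For the reverse inclusion, suppose $\alpha\in y$, witnessed by $q\leq p$ with $q\restrict\P_n^*\in G_n^*$ and $q\forces\check\alpha\in\tau$. Choose $q'\in G$ deciding ``$\check\alpha\in\tau$''; if $q'\forces\check\alpha\in\tau$ we are done, so assume $q'\forces\check\alpha\notin\tau$. Extending if necessary, I may assume $q'\leq p$ and that $q$ and $q'$ agree on coordinates $\leq n$ (since $q\restrict\P_n^*,q'\restrict\P_n^*\in G_n^*$, a common extension of these in $G_n^*$ can be folded back into both $q$ and $q'$ without changing which ``high'' coordinates they use). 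Now I build an automorphism $\pi$ of $\P$ that is the identity on coordinates $\leq n$, swaps the ``high parts'' of $q$ and $q'$, and is the identity on all coordinates past some large enough $k$ bounding the supports of $q$ and $q'$. On each coordinate $i\in(n,k)$ the conditions are finite partial functions, so $\pi_i$ is the standard finite permutation that exchanges the values dictated by $q_i$ and $q_i'$ and fixes everything else; set $\pi=\la\pi_i\mid i<\omega\ra$ acting coordinatewise, which is legitimate without $\AC$ because only finitely many $\pi_i$ are nontrivial. Then $\pi(q)$ and $q'$ are compatible (they agree on coordinates $\leq n$, where both restrictions lie in $G_n^*$, and $\pi$ has moved $q$'s high part onto $q'$'s high part). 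Since $\pi$ fixes every term of $\mathcal{L}_3$ — the terms $\check x$ for $x\in N$ and the predicate $\check N$ are fixed by any automorphism, and $\dot G_n^*$ is fixed because $\pi$ is the identity on $\P_n^*$ — we have $\pi(\tau)=\tau$, hence $\pi(q)\forces\check\alpha\in\tau$, contradicting $q'\forces\check\alpha\notin\tau$ via their common extension. Therefore $y\subseteq x$, and $x=y\in N[G_n^*]$.

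The main obstacle, as in Lemma \ref{keylemma}, is the bookkeeping needed to arrange that $q$ and $q'$ genuinely agree below coordinate $n$ and that the swap automorphism $\pi$ makes $\pi(q)$ compatible with $q'$; one must be careful that throwing the ``mismatched'' high coordinates of $q'$ into $\pi(q)$ does not disturb the part of $q$ forced into the generic filter on the first $n+1$ coordinates. The point that makes this easier than the Prikry case is that finite-support product forcing has genuine automorphisms built coordinatewise from finite permutations of the collapse conditions, so there is no need to pass to isomorphisms between cones $\P/q''\to\P/q'''$ with carefully adjusted stems; everything can be done with a single automorphism of all of $\P$. One should also double-check, exactly as flagged in the surrounding text, that no $\AC$ is used in defining $\pi$ — which is clear since $\pi$ is determined by the finitely many conditions $q_i,q_i'$ for $i<k$. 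Finally, I would remark that the same argument applies verbatim with $\Coll(\omega,{<}\kappa)$ replaced by the present telescoping product, and in fact is the standard Bull-style symmetric argument referenced via \cite[Lemma 6.2 and 5.3]{Bull:SuccessiveLargeCardinals}.
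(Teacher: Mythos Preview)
Your argument is correct and is precisely the standard homogeneity argument that the paper defers to by citing \cite[Lemma 2.1]{Apter:SuccessorsOfSingularCardinalsAnd}; the structure---define $y$ in $N[G_n^*]$, prove $x=y$ by using an automorphism of $\P$ that is the identity on $\P_n^*$ to derive a contradiction---matches exactly. One small slip: in the present setting $\P_i=\Coll(\kappa_{i-1},{<}\kappa_i)$ for $i\geq 1$, so conditions in $\P_i$ are partial functions of size ${<}\kappa_{i-1}$, not finite; your description of $\pi_i$ as a ``finite permutation'' is therefore inaccurate, but the construction of $\pi_i$ from $q_i,q_i'$ (via the weak homogeneity of the L\'evy collapse) goes through unchanged and is still explicitly definable without $\AC$, so the argument is unaffected.
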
 
\noindent For a proof of Lemma \ref{lemmacollapseomega}, one may consult \cite[Lemma 2.1]{Apter:SuccessorsOfSingularCardinalsAnd}.

We now argue as in our sketch of the proof
of Theorem \ref{theoremcollapse1}.
Since $N$ and $V$ contain the same bounded subsets of $\kappa$,
and $V\subseteq N$, $\P_n^*$ can be well-ordered in both $V$ and $N$
with order type less than $\kappa$.
Therefore, as before, the usual proofs show that cardinals
and cofinalities greater than or equal to $\kappa$ are preserved.
Furthermore, $M\models\lnot\AC_\omega$ since
$\la G^*_n\mid n<\omega\ra\notin M$.
It follows that $M$ is thus once again the desired model.\end{proof}

We remark that, as in
\cite[Theorem 2]{Apter:SuccessorsOfSingularCardinalsAnd},
in the model $M$ constructed in the above proof of
Theorem \ref{theoremcollapseomega},
$\aleph_\omega$ is a strong limit cardinal.
Also, as we mentioned earlier, by changing the
cardinals to which each $\kappa_i$ is
collapsed, it is possible to collapse $\kappa$ to
$\aleph_{\omega + \omega}$, $\aleph_{\omega^2}$, etc.

\section{An additional result and some open questions}\label{sectionadditional}

In the above results, from $\GCH$ and a supercompact cardinal $\kappa$, we obtain models of $\ZF$ with consecutive singular cardinals, $\kappa$ and $\kappa^+$, in which there is a sequence of distinct subsets of $\kappa$ with any predetermined length---and hence---there is a sequence of distinct subsets of $\kappa^+$ with this same length. This suggests the following question.
\begin{question} \label{question1}
Suppose $\theta_1$ and $\theta_2$ are arbitrary ordinals. Are there models of $\ZF$ with consecutive singular cardinals, $\kappa$ and $\kappa^+$, in which there are sequences of distinct subsets of $\kappa$ and $\kappa^+$ having lengths $\theta_1$ and $\theta_2$ respectively?
\end{question}
\noindent To avoid trivialities, we also require in Question \ref{question1} that there is no sequence of subsets of $\kappa$ of length $\theta_2$.

Let us remark that in Gitik's model in which all uncountable cardinals
are singular (see \cite{Gitik:AllUncountableCardinalsCanBeSingular}),
for every pair of cardinals $\kappa$ and $\kappa^+$,
there is a sequence of distinct subsets
of $\kappa$ of length $\theta_1$ and a sequence
of distinct subsets of $\kappa^+$ of length $\theta_2$,
where $\theta_1$ and $\theta_2$ are ordinals satisfying
$\kappa<\theta_1<\kappa^+<\theta_2<\kappa^{++}$.
In this sense, Question \ref{question1}
is partially answered by Gitik's model, for some particular $\theta_1$
and $\theta_2$.  However, neither Gitik's model nor our previous theorems address Question \ref{question1}
if we require, e.g., that $\theta_1=\kappa^+$ and $\theta_2\geq\kappa^{++}$.
The following theorem provides more information towards an
answer to Question \ref{question1}, for the case in which
$\kappa<\theta_1<\kappa^+$ and $\theta_2\geq\kappa^+$.

\begin{theorem}\label{theoremadditional}
Suppose $\GCH$ holds, $\kappa < \lambda$
are such that $\kappa$ is $2^\lambda$-supercompact,
and $\lambda$ has cofinality $\omega$
with $\{\alpha<\lambda\mid o(\alpha)\geq\alpha^{+n}\}$
cofinal in $\lambda$ for every $n<\omega$.
Then there is a forcing extension $V[G]$
with a symmetric inner model $N\subseteq V[G]$ of $\ZF$ in which
\begin{enumerate}
\item $\cf(\kappa)=\cf(\kappa^+)=\omega$,
\item there is no $\kappa^+$-sequence of distinct subsets of $\kappa$, and 
\item there is a sequence of distinct subsets of $\kappa^+$ of length $\kappa^{+17}$.
\end{enumerate}
\end{theorem}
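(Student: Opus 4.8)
The plan is to run an argument parallel to the proof of Theorem \ref{maintheorem}, but with supercompact Prikry forcing replaced by a product (or iteration) of Gitik-style Prikry-type forcings designed to singularize $\kappa$, singularize $\kappa^+$, and—crucially—blow up the power set of $\kappa^+$ (to $\kappa^{+17}$) via the Gitik--Magidor technique of extender-based Prikry forcing, while keeping $2^\kappa$ small so that (2) holds. More precisely: first force with $\Add(\kappa^+,\kappa^{+17})$-type forcing (or incorporate the extra subsets directly into the extender-based construction over $\kappa^+$) so that in the ground model $V$ we have $2^{\kappa^+}=\kappa^{+17}$ while $2^\kappa=\kappa^+$; then, over $V$, force with the appropriate Prikry-type poset $\P$ that (a) adds an $\omega$-sequence cofinal in $\kappa$ (supercompact Prikry forcing relative to a normal fine measure on $P_\kappa\lambda$ with the Menas partition property, as in Section \ref{sectionpreliminaries}), and (b) simultaneously, using the hypothesis that $\{\alpha<\lambda\mid o(\alpha)\geq\alpha^{+n}\}$ is cofinal in $\lambda$ for each $n$, runs an extender-based Prikry forcing that collapses $\lambda$ to become $(\kappa^+)^N$, changes its cofinality to $\omega$, and preserves the $\kappa^{+17}$-many subsets of $\kappa^+=\lambda$. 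The hypothesis ``$\lambda$ has cofinality $\omega$'' is what lets $\kappa^+$ end up singular in $N$ with $\cf(\kappa^+)^N=\omega$, exactly as $\cf(\lambda)^V<\kappa$ forced singularity of $\kappa^+$ in the main theorem.

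The symmetric inner model $N$ is then defined exactly as in Section \ref{sectionmaintheorem}: using the measure/extender restrictions $U\restrict\delta$ (and the corresponding restrictions of the extender sequence) for regular $\delta\in[\kappa,\lambda)$, one forms the ramified sublanguage $\mathcal{L}_1$ with names $\dot r_\delta$ for the restricted generic objects, and lets $N$ be the resulting symmetric model, which contains each $r_\delta$ but not the full Prikry object $r$. One proves the analogue of Lemma \ref{keylemma}: \emph{every set of ordinals in $N$ lies in $V[r_\delta]$ for some regular $\delta\in[\kappa,\lambda)$}. The proof is the same homogeneity-of-tails argument—given a term $\tau\in\mathcal{L}_1$ mentioning a single $\dot r_\delta$ and a condition deciding $\tau\subseteq\beta$, one shows $x=\{\alpha<\beta\mid \exists q\le p\ (q\restrict\delta\in G\restrict\delta\ \wedge\ q\forces\alpha\in\tau)\}$ by building, for any competing condition $q'\in G$, an isomorphism of the relevant cone that fixes $\tau$ (because $\tau$ sees only the $\delta$-part of the generic)—with the bookkeeping for the extender coordinates handled as in Gitik's Handbook chapter. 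From the key lemma: since the restricted forcings have good chain condition/size below $\kappa$ (here one uses $\GCH$ in the ground model, before the power-set blowups, to get $(\delta^{<\kappa})^{V_0}=\delta$, just as in the main proof), no cardinal $\ge\lambda$ is collapsed in $N$, so $(\kappa^+)^N=\lambda$; and since $N$ and $V$ have the same bounded subsets of $\kappa$, $\cf(\kappa)^N=\omega$, and $\cf(\kappa^+)^N=\cf(\lambda)^V=\omega$, giving (1).

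For (2) and (3): $V\subseteq N$, so the $\kappa^{+17}$-many distinct subsets of $\lambda=(\kappa^+)^N$ present in $V$ survive into $N$, giving (3). For (2), one argues that any $\kappa^+$-sequence of distinct subsets of $\kappa$ in $N$ would, by the key lemma applied coordinatewise (or to a code for the whole sequence), live in some $V[r_\delta]$; but in $V[r_\delta]$ we have $2^\kappa\le$ (something $<\lambda=(\kappa^+)^N$)—because $2^\kappa=\kappa^+$ in $V$ and the restricted Prikry forcing $\P_{U\restrict\delta}$ adds no bounded subsets of $\kappa$ and is small enough not to blow up $2^\kappa$—so there can be no sequence of $(\kappa^+)^N=\lambda$-many distinct subsets of $\kappa$ there, a contradiction. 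Finally, since a code for such a long sequence of subsets of $\kappa$ would be a set of ordinals not in any single $V[r_\delta]$, the key lemma yields (2) directly. \textbf{The main obstacle} I anticipate is the simultaneous bookkeeping: arranging a single Prikry-type poset (or a carefully chosen product/iteration) that changes $\cf(\kappa)$ to $\omega$ \emph{and} collapses $\lambda$ to $\kappa^+$ while the extender-based machinery needed to preserve $\kappa^{+17}$-many subsets of $\lambda$ still admits the restriction maps $p\mapsto p\restrict\delta$ and the tail-homogeneity isomorphisms required for the key lemma—i.e., verifying that the Gitik extender-based forcing is ``nice enough'' (has the Prikry property, adds no bounded subsets of $\kappa$, and factors appropriately below each regular $\delta<\lambda$) for the symmetric-model argument to go through verbatim. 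Getting the numerology to land on exactly $\kappa^{+17}$ (rather than merely ``some cardinal above $\kappa^+$'') is where the precise form of the hypothesis on $\{\alpha<\lambda\mid o(\alpha)\ge\alpha^{+n}\}$ is used, following Gitik--Magidor.
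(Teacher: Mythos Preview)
Your overall architecture (preliminary blowup, then Prikry-type forcing, then a symmetric submodel governed by a key lemma) matches the paper's, and your argument for (2) is essentially the right one. But there is a genuine misidentification at the heart of the construction that would make your version fail.

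\textbf{The blowup is at $\lambda$, not at the ground-model $\kappa^+$.} In $N$ one has $(\kappa^+)^N=\lambda$, so to obtain (3) you need $\lambda^{+17}$ many distinct subsets of $\lambda$ in $V$, not $\kappa^{+17}$ many subsets of the ground-model $\kappa^+$. Your proposed $\Add(\kappa^+,\kappa^{+17})$ (or an extender-based forcing ``over $\kappa^+$'') produces subsets of the wrong cardinal; those subsets become subsets of an ordinal of size $\kappa$ in $N$ and say nothing about $P((\kappa^+)^N)$. Moreover, $\lambda$ is singular, so one cannot simply use Cohen forcing at $\lambda$; this is exactly what the Mitchell-order hypothesis is for. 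The paper invokes Gitik's result from \cite{Gitik:BlowingUpPowerOfASingularCardinal}: under the stated hypothesis on $\{\alpha<\lambda\mid o(\alpha)\ge\alpha^{+n}\}$, there is a cardinal-preserving forcing $\P$ that adds no bounded subsets of $\lambda$ and forces $2^\lambda=\lambda^{+\delta+1}$ (here $\delta=16$). So the hypothesis is consumed entirely by this preliminary step, not by any extender-based collapse machinery.

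\textbf{No extender-based Prikry forcing is needed for the collapse.} After the blowup, the paper simply repeats the construction of Theorem~\ref{maintheorem}: let $U\in V_0$ be a normal fine measure on $P_\kappa\lambda$ with the Menas property and let $\Q$ be the supercompact Prikry forcing defined from $U$, computed in $V=V_0[G]$. The only subtlety is that $U$ is no longer a measure in $V$ (since $P(P_\kappa\lambda)$ grew), but because $\P$ added no bounded subsets of $\lambda$, each restriction $U\restrict\gamma$ for regular $\gamma<\lambda$ \emph{is} still a normal fine measure with the Menas property in $V$. Hence the restricted Prikry posets $\Q_{U\restrict\gamma_n}$ are genuine supercompact Prikry forcings, the Mathias criterion applies, and Lemma~\ref{keylemma} goes through verbatim for a fixed cofinal $\omega$-sequence $\langle\gamma_n\mid n<\omega\rangle$ of regular cardinals. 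This is precisely why $\cf(\lambda)=\omega$ is assumed: countably many restrictions suffice to generate $N$. Your proposed combined extender-based forcing, with its attendant bookkeeping and factorization worries, is unnecessary; the ``main obstacle'' you anticipate does not arise because the paper never attempts such a combination.

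Finally, for (3) note that cardinals $\ge\lambda$ are preserved to $N$, so $(\kappa^{+17})^N=\lambda^{+17}$, and the injection $f:\lambda^{+17}\to P(\lambda)$ already in $V\subseteq N$ witnesses (3). For (2), your argument is correct once placed in the right setting: $\lambda$ remains a strong limit in each $V[r_{\gamma_n}]$ because $|\Q_{U\restrict\gamma_n}|<\lambda$.
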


Let us remark that the hypotheses of Theorem \ref{theoremadditional}
follow from $\GCH$ and the existence of $\kappa<\delta$
such that $\kappa$ is
$\delta$-supercompact and $\delta$ is $\delta^+$-supercompact.
We also note that by \cite{Gitik:BlowingUpPowerOfASingularCardinal},
in Theorem \ref{theoremadditional}(3) above, one can replace 17
with $\delta+1$ for any $\delta<\aleph_1$.
In addition, note that the hypotheses of Theorem \ref{theoremadditional}
imply that $\lambda$ is a strong limit cardinal,
since it is a limit of inaccessible cardinals.

\begin{proof}[Proof of Theorem \ref{theoremadditional}]
In \cite{Gitik:BlowingUpPowerOfASingularCardinal},
Gitik shows that under these hypotheses on $\lambda$, if $\delta<\aleph_1$,
then there is a forcing notion, call it $\P$,
that preserves cardinals, adds no new bounded subsets to $\lambda$,
and forces $2^\lambda=\lambda^{+\delta+1}$.
It will suffice for us to take $\delta= 16$ so that we achieve (3).

Let $V_0$ satisfy the hypotheses of Theorem \ref{theoremadditional}.
Let $G\subseteq\P$ be $V_0$-generic, and let $V:=V_0[G]$.
Then it follows by Gitik's result that
there is an injection $f:\lambda^{+17}\to P(\lambda)$ in $V$.
Since $\kappa$ is $2^\lambda$-supercompact in $V_0$,
we may let $U\in V_0$ denote a normal fine measure
on $(P_\kappa\lambda)^{V_0}$ satisfying the Menas partition property.
Since $\P$ does not add bounded subsets to $\lambda$,
it follows that $\lambda$ remains a strong limit cardinal
in $V=V_0[G]$, and $\kappa$ remains $\gamma$-supercompact
in $V$ for each cardinal $\gamma<\lambda$.
Indeed, if we let $U\restrict \gamma:= U\cap P(P_\kappa\gamma)$
for each regular cardinal $\gamma<\lambda$,
then $U\restrict\gamma$ is a normal fine measure on $P_\kappa\gamma$
in $V$ satisfying the Menas partition property.

In $V$, let $\langle \gamma_n\mid n<\omega\rangle$
be a sequence of regular cardinals cofinal in $\lambda$,
and let $\Q_{U\restrict\gamma_n}$ denote the supercompact Prikry
forcing over $P_\kappa(\gamma_n)$ defined using $U\restrict\gamma_n$.
Even though $U$ will not be a normal measure on $P_\kappa\lambda$ in $V$,
we can use it in the definition of supercompact Prikry forcing over
$P_\kappa\lambda$. Call this forcing $\Q$.
Let $H$ be $V$-generic for $\Q$,
and let $r_{\gamma_n}$ be the supercompact Prikry sequence
for $\Q_{U\restrict\gamma_n}$ obtained from $H$
as in the proof of Theorem \ref{maintheorem}.
Let $N$ be the smallest inner model of $V[H]$
that contains $r_{\gamma_n}$ for each $n<\omega$
but does not contain $H$. More formally, let
$\mathcal{L}_4$ be the ramified sublanguage
of the forcing language associated
with $\Q$ containing terms $\check{v}$ for each $v\in V$,
a unary predicate $\check{V}$ for $V$, and canonical terms
$\dot r_{\gamma_n}$ for each
$r_{\gamma_n}$. We now define $N$ inductively inside $V[H]$ as follows:

\begin{align*}
N_0&=\emptyset, \\ 
N_\delta&=\bigcup_{\alpha<\delta}N_\alpha\textrm{ for $\delta$ a limit ordinal}, \\ 
N_{\alpha+1}&=\{x\subseteq N_\alpha\mid \textrm{$x$ can be defined over $\langle N_\alpha,\in,c\rangle_{c\in N_\alpha}$} \\
& \hspace{1in}\textrm{using a forcing term $\tau\in\mathcal{L}_4$ of rank $\leq\alpha$}\}, \textrm{ and}\\ 
N&=\bigcup_{\alpha\in\ORD} N_\alpha.
\end{align*}

\begin{lemma}\label{lemmaadditionalkey}
If $x\in N$ is a set of ordinals, then there is an $n<\omega$ such that $x\in V[r_{\gamma_n}]=V_0[G][r_{\gamma_n}]$. 
\end{lemma}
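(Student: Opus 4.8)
The plan is to follow the proof of Lemma~\ref{keylemma} essentially verbatim, indicating only where the present setting differs. Let $\tau\in\mathcal{L}_4$ be a term for $x$, and fix $p\in H$ and an ordinal $\beta$ with $p\forces_{\Q,V}\tau\subseteq\beta$. Since $\tau\in\mathcal{L}_4$, it mentions only finitely many of the symbols $\dot r_{\gamma_n}$, say those with indices $n_1<\cdots<n_k$. Because the sequence $\la\gamma_n\mid n<\omega\ra$ is increasing, $r_{\gamma_m}=\la P_j\cap\gamma_m\mid j<\omega\ra$ is uniformly definable over $V$ from $r_{\gamma_{n_k}}$ whenever $m\le n_k$ (as $P_j\cap\gamma_m=(P_j\cap\gamma_{n_k})\cap\gamma_m$), so after rewriting $\tau$ we may assume it mentions a single symbol $\dot r_\delta$ with $\delta:=\gamma_{n_k}$; it then suffices to prove $x\in V[r_\delta]$.

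As in the proof of Lemma~\ref{keylemma}, let $H\restrict\delta:=H\cap\Q_{U\restrict\delta}$ be the $V$-generic filter for $\Q_{U\restrict\delta}$ generated by $r_\delta$, and for $q=\la Q_1,\ldots,Q_l,A\ra\in\Q$ put $q\restrict\delta:=\la Q_1\cap\delta,\ldots,Q_l\cap\delta,A\restrict\delta\ra\in\Q_{U\restrict\delta}$. Set
$$y:=\{\alpha<\beta\mid\exists q\le p\ (q\restrict\delta\in H\restrict\delta\textrm{ and }q\forces_{\Q,V}\alpha\in\tau)\}.$$
Then $y\in V[r_\delta]$, so it is enough to show $x=y$. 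The inclusion $x\subseteq y$ is immediate, since any $p'\le p$ in $H$ forcing $\alpha\in\tau$ has $p'\restrict\delta\in H\restrict\delta$. For $y\subseteq x$, given $\alpha\in y$ with witness $q$, choose $q'\in H$ deciding $\alpha\in\tau$; if $q'\forces_{\Q,V}\alpha\notin\tau$, one constructs, word for word as in Lemma~\ref{keylemma}, conditions $q''\le q$ and $q'''\le q'$ and an isomorphism $\Psi:\Q/q''\to\Q/q'''$ with $\Psi(q'')=q'''$, induced by a permutation of $P_\kappa\lambda$ moving only finitely many points; since $\tau$ mentions only $\dot r_\delta$ and $\Psi$ preserves the stems modulo $\delta$, $\Psi$ fixes $\tau$, so $q'''\forces_{\Q,V}\alpha\in\tau$, contradicting $q'''\le q'\forces_{\Q,V}\alpha\notin\tau$. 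Hence $x=y\in V[r_\delta]=V_0[G][r_\delta]$.

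The step I expect to require the most care --- indeed the only genuine difference from Lemma~\ref{keylemma} --- is checking that the argument survives the fact that, in $V$, $U$ is no longer a normal fine measure on $P_\kappa\lambda$ but merely a $\kappa$-complete fine (hence nonprincipal) ultrafilter there. The proof above uses normality of $U$ on $P_\kappa\lambda$ nowhere: the only facts about $U$ that enter are (i) that a permutation of $P_\kappa\lambda$ fixing all but finitely many points sends $U$-large sets to $U$-large sets, which holds because such a permutation changes a set only by a finite symmetric difference and $U$ is nonprincipal; and (ii) that for each regular $\gamma_n<\lambda$, $U\restrict\gamma_n$ is a normal fine measure on $P_\kappa\gamma_n$ satisfying the Menas partition property --- this was established in the proof of Theorem~\ref{theoremadditional}, and it is what guarantees both that $\Q_{U\restrict\gamma_n}$ adds no bounded subsets of $\kappa$ and that $r_{\gamma_n}$ generates a $V$-generic filter for $\Q_{U\restrict\gamma_n}$ via the Mathias genericity criterion. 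With these two observations, the proof of Lemma~\ref{keylemma} transfers without further change.
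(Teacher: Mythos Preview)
Your proposal is correct and matches the paper's own treatment, which simply says ``the proof of Lemma~\ref{lemmaadditionalkey} is the same as that of Lemma~\ref{keylemma} above''; your explicit reduction of finitely many $\dot r_{\gamma_n}$ to a single $\dot r_{\gamma_{n_k}}$ and your check that the permutation argument survives the weaker hypothesis on $U$ are exactly the points one needs to verify. One small inaccuracy: $U$ need not remain an \emph{ultrafilter} in $V=V_0[G]$ (Gitik's forcing adds new subsets of $P_\kappa\lambda$), only a $\kappa$-complete fine filter --- but as you yourself observe, your argument for (i) only uses that $U$ is a nonprincipal filter, so nothing is lost.
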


The proof of Lemma \ref{lemmaadditionalkey} is the same as that of Lemma \ref{keylemma} above. Using Lemma \ref{lemmaadditionalkey}, it is straightforward to verify that the conclusions of Theorem \ref{theoremadditional} hold in $N$. Just as in the above proof of Theorem \ref{maintheorem}, it follows from Lemma \ref{lemmaadditionalkey} that $(\kappa^+)^N=\lambda$ and $\cf(\kappa)^N=\cf(\kappa^+)^N=\omega$, which implies that (1) holds in $N$. Furthermore, since the injection $f$ is in $V_0[G]=V\subseteq N$, we conclude that (3) holds in $N$. It remains to show that (2) holds in $N$. Working in $N$, suppose that $\vec{x}=\la x_\alpha\mid\alpha<\kappa^+\ra$ is a sequence of distinct subsets of $\kappa$. Then by Lemma \ref{lemmaadditionalkey}, $\vec{x}\in V[r_{\gamma_n}]$ for some $n<\omega$. This is impossible, since $\lambda=(\kappa^+)^N$
remains a strong limit cardinal in $V[r_{\gamma_n}]$
because $|\Q_{U\restrict\gamma_n}| < \lambda$.
\end{proof}


The results in this paper suggest the question as to whether
one can prove an Easton theorem-like result,
but for models of $\ZF$ with consecutive singular cardinals. 
Let us state two seemingly very difficult related open questions.

\begin{question}
From large cardinals, is there a model of $\ZF$ in which every cardinal
is singular and in which
for every cardinal $\kappa$,
there is a sequence of $\kappa^{+}$ distinct subsets of $\kappa$?
\end{question}

\begin{question}
From large cardinals, is there a model of $\ZF$ in which every cardinal is singular and in which $\GCH$ fails everywhere in the sense that for every cardinal $\kappa$, there is a sequence of $\kappa^{++}$ distinct subsets of $\kappa$?
\end{question}

Addressing Question \ref{question1}, one would also like to obtain models of $\ZF$ with consecutive singular cardinals, say $\kappa$ and $\kappa^+$, where $\kappa^+$ has uncountable cofinality, $\theta_1<\theta_2$ are cardinals, and $\theta_2\geq\kappa^{+3}$. Notice that Gitik's methods for violating $\GCH$ at ground model singular cardinals do not seem to work for singular cardinals of uncountable cofinality. This suggests the following alternative strategy. Let $\kappa<\lambda$ be the appropriate large cardinals. Using standard techniques, blow up the size of the powerset of $\lambda$ while preserving ``sufficiently many'' of the large cardinal properties of $\kappa$ and $\lambda$. This will allow us to change the cofinality of $\lambda$ to some uncountable cardinal and to change the cofinality of $\kappa$, while simultaneously collapsing all cardinals in the interval $(\kappa,\lambda)$ to $\kappa$. However, the standard forcings for changing to uncountable cofinality at $\lambda$, e.g. Radin or Magidor forcing, will introduce Prikry sequences to unboundedly many cardinals in the interval $(\kappa,\lambda)$ (see \cite{Gitik:Handbook}). By \cite[Theorem 11.1(1)]{CumForMag:SquareScales}, this will introduce nonreflecting stationary subsets of ordinals of cofinality $\omega$ to unboundedly many regular cardinals $\delta$ in the interval $(\kappa,\lambda)$. By \cite[Theorem 4.8]{SolovayReinhardtKanamori} and the succeeding remarks, no cardinal below $\lambda$ is strongly compact up to $\lambda$.  Thus one cannot use the standard forcings for changing the cofinality of $\kappa$ while simultaneously collapsing cardinals in the interval $(\kappa,\lambda)$ to $\kappa$. This suggests that one would like some forcing notion that changes the cofinality of $\lambda>\kappa$ to an uncountable cardinal, and also preserves enough of the original large cardinal properties of $\kappa$ to allow these collapses to occur. As pointed out by the referee of this paper, by the work of Woodin
\cite{Woodin:SuitableExtenderModelsOne} on inner models for supercompact cardinals, it appears as though this is impossible.


\end{document}